\theoremstyle{plain}
\newtheorem{thm}{Theorem}[section]
\newtheorem{lem}[thm]{Lemma}
\newtheorem*{thm*}{Theorem}
\theoremstyle{definition}
\theoremstyle{remark}
\definecolor{cof}{RGB}{219,144,71}
\definecolor{pur}{RGB}{186,146,162}
\definecolor{greeo}{RGB}{91,173,69}
\definecolor{greet}{RGB}{52,111,72}
             \def\N{{\mathbb{N}}}            
\newcommand*\colvec[1]{
        \global\colveccount#1
        \begin{bmatrix}
        \colvecnext
}
\def\colvecnext#1{
        #1
        \global\advance\colveccount-1
        \ifnum\colveccount>0
                \\
                \expandafter\colvecnext
        \else
                \end{bmatrix}
        \fi
}
\tikzstyle{vertex}=[circle, draw, inner sep=0pt, minimum size=6pt]
\newtheorem*{theorem*}{Theorem}
\tikzstyle{vertex}=[circle, draw, inner sep=0pt, minimum size=6pt]
\journal{arxiv}
\begin{document}

\begin{frontmatter}

\title{Efficient Algorithm for Checking 2-Chordal (Doubly Chordal) Bipartite Graphs}

\author{Austin Alderete}
\corref{mycorrespondingauthor}
\cortext[mycorrespondingauthor]{Corresponding author}
\ead{aaldere@math.utexas.edu}
\address{University of Texas at Austin}

\begin{abstract}
\small{
We present an algorithm for determining whether a bipartite graph $G$ is 2-chordal (formerly doubly chordal bipartite). At its core this algorithm is an extension of the existing efficient algorithm for determining whether a graph is chordal bipartite. We then introduce the notion of $k$-chordal bipartite graphs and show by inductive means that a slight modification of our algorithm is sufficient to detect this property. We show that there are no nontrivial $k$-chordal bipartite graphs for $k \geq 4$ and that both the $2$-chordal bipartite and $3$-chordal bipartite problem are contained within complexity class $\mathrm{P}$.}
\end{abstract}

\begin{keyword}
doubly chordal \sep 2-chordal
\end{keyword}

\end{frontmatter}

\section{Introduction}

The study of chordal graphs can be traced back to 1958 when they were first introduced by Hajnal and Sur\'{a}nyi. These are the graphs for which every cycle of length four or greater contains a chord \cite{Hajnal}. Since then several subclasses and variations of chordal graphs, notably bipartite analogs, have been introduced. Of interest to us is the notion of a 2-chordal bipartite (also known as a doubly chordal bipartite) graph which was recently connected to discrete statistical models and their maximum likelyhood degrees \cite{Coons}. Efficient algorithm for recognizing chordal graphs and chordal bipartite graphs are known and are of polynomial complexity on the size of the vertex set\cite{Farber} \cite{Dirac} \cite{Pelsmajer}. 2-chordal bipartite graphs can be characterized as those bipartite graphs which are chordal bipartite and lack a double-square graphs as an induced subgraph, however the induced subgraph problem is known to be NP-complete in general \cite{Coons} \cite{David}.

In this work, we present an efficient algorithm for checking whether a graph is 2-chordal bipartite and as a corollary show that the induced subgraph problem for the double-square graph is solvable in polynomial time. The algorithm breaks into two components- the first being the check that the graph is chordal bipartite and the second being an iterative process which runs over all edge deletions and is based on the vertex characterization due to Farber. We also show that the conditions checked are both necessary and sufficient conditions (although there may be a faster check for them) and then characterize the runtime of the algorithm. The key theorem underlying this algorithm is Theorem \ref{thm:mainThm}.

\newtheorem*{thm:mainThm}{Theorem \ref{thm:mainThm}}
\begin{thm:mainThm}[2-Chordal Bipartite Identification Theorem]
Given a bipartite graph $G = (X,Y,E)$, it is 2-chordal bipartite if and only if both of the following are true:
\begin{itemize}
\item[(i)] $G$ is chordal bipartite.
\item[(ii)] for each $e \in E$, the graph $G_e = (X,Y,E-\{e\})$ obtained from $G$ by deleting edge $e$ is chordal bipartite.
\end{itemize}
\end{thm:mainThm}

We also introduce and explore the natural generalization of Theorem \ref{thm:mainThm} to $k$-chordal bipartite graphs. This generalization allows us to propose algorithms which check the property of being $k$-chordal bipartite and we show that they are also in complexity class $P$ for all $k$. However, we provide a proof that there are no nontrivial $k$-chordal bipartite graphs for $k \geq 4$ where nontrivial means that they contain a cycle of size at least $6$ and in doing so remove the need for anything other than the $k=2,3$ algorithms.

\section{Background}

In this section we lay out a few foundational results about chordal and strongly chordal graphs as well as their connection to chordal bipartite graphs. Formally, a \textit{chordal graph} is a simple graph possessing no chordless cycles of length four or greater. Such a cycle is called a hole and so a chordal graph is a graph without holes \cite{Hajnal}. A subclass of these graphs, introduced by Farber, is the \textit{strongly chordal} graphs; these are chordal graphs for which every even cycle of length at least 6 has a strong chord, i.e. a chord joining vertices whose distance along the cycle is odd \cite{Farber}. As we will see, strongly chordal graphs are closely related to another subclass found among bipartite graphs.

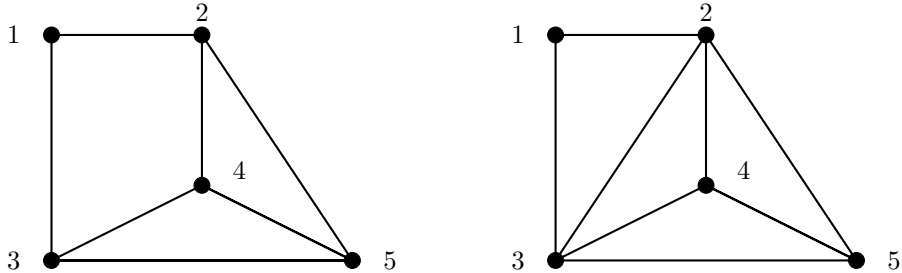
\begin{figure}[h]
\centering
\begin{tikzpicture}
\draw[fill=black] (0,0) circle (3pt);
\draw[fill=black] (4,0) circle (3pt);
\draw[fill=black] (2,1) circle (3pt);
\draw[fill=black] (2,3) circle (3pt);
\draw[fill=black] (0,3) circle (3pt);
\node at (-0.5,3) {1};
\node at (-0.5,0) {3};
\node at (4.5,0) {5};
\node at (2.5,1.2) {4};
\node at (2,3.3) {2};
\draw[thick] (0,0) -- (4,0) -- (2,1) -- (0,0)  -- (4,0) -- (2,1) -- (2,3);
\draw[thick] (0,0) -- (0,3) -- (2,3) -- (4,0);
\end{tikzpicture}
~~~~~~~~~
\begin{tikzpicture}
\draw[fill=black] (0,0) circle (3pt);
\draw[fill=black] (4,0) circle (3pt);
\draw[fill=black] (2,1) circle (3pt);
\draw[fill=black] (2,3) circle (3pt);
\draw[fill=black] (0,3) circle (3pt);
\node at (-0.5,3) {1};
\node at (-0.5,0) {3};
\node at (4.5,0) {5};
\node at (2.5,1.2) {4};
\node at (2,3.3) {2};
\draw[thick] (0,0) -- (4,0) -- (2,1) -- (0,0) -- (2,3) -- (4,0) -- (2,1) -- (2,3);
\draw[thick] (0,0) -- (0,3) -- (2,3);
\end{tikzpicture}
\caption{Non-chordal (left) versus chordal (right) graph}
\end{figure}

We will put a name to two special types of vertices: simplicial vertices and simple vertices. A \textit{simplicial vertex} is a vertex whose open neighborhood forms a clique; that is, a vertex $v$ in the graph $G$ is simplicial if the subgraph induced on $N_G(v)$ is a complete graph. Meanwhile, a \textit{simple vertex} is a vertex whose neighbors have closed neighborhoods which can be ordered as a chain under inclusion; that is, a vertex $v$ in the graph $G$ with open neighborhood $N_G(v)= \{ v_1,...,v_k \}$ is simple if there exists some relabeling $\sigma$ on $[k]$ such that
\[
N_G[v_{\sigma(1)}]
\subseteq
N_G[v_{\sigma(2)}]
\subseteq
...
\subseteq
N_G[v_{\sigma(k)}]
\]
Observe that every simple vertex is also simplicial. The proof is as follows, suppose $v$ is a simple vertex and consider the induced subgraph on $N_G(v)$. It suffices to show that for any pair of edges $u,w \in N_G(v)$ there is an edge between them. As the set of closed neighbhorhoods of neighbors of $v$ form a chain, we have either $u \in N_G[u] \subseteq N_G[w]$ or $w \in N_G[w] \subseteq N_G[u]$. In either case, $uw \in E(G)$ as desired.

\begin{figure}[h]
\centering
\begin{tikzpicture}
\draw[fill=black] (0,0) circle (3pt);
\draw[fill=black] (4,0) circle (3pt);
\draw[fill=black] (2,1) circle (3pt);
\draw[fill=black] (2,3) circle (3pt);
\draw[fill=black] (7,0) circle (3pt);
\node at (0,-0.5) {1};
\node at (4,-0.5) {3};
\node at (2.5,1.2) {4};
\node at (2,3.3) {2};
\node at (7,-0.5) {5};
\draw[thick] (0,0) -- (4,0) -- (2,1) -- (0,0)  -- (4,0) -- (2,1) -- (2,3);
\draw[thick] (0,0) -- (2,3) -- (4,0);
\draw[thick] (4,0) -- (7,0);
\end{tikzpicture}
\caption{Graph with simple and simplicial vertex 4}
\end{figure}
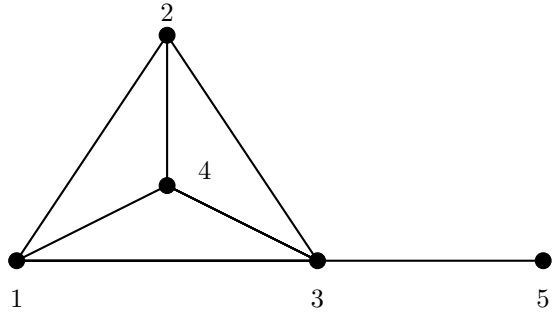

\begin{figure}[h]
\centering
\begin{tikzpicture}
\draw[fill=black] (0/2,0/2) circle (3pt);
\draw[fill=black] (4/2,0/2) circle (3pt);
\draw[fill=black] (2/2,1/2) circle (3pt);
\draw[fill=black] (2/2,3/2) circle (3pt);
\draw (7/2,0/2) circle (3pt);
\node at (0,-0.3) {1};
\node at (2,-0.3) {3};
\node at (1.25,0.6) {4};
\node at (1,1.8) {2};
\node at (3.5,-0.3) {5};
\draw[thick] (1,0.5) -- (0,0) -- (1,1.5) -- (2,0) -- (1,0.5);
\draw[thick] (1,1.5) -- (1,0.5);
\draw[thick] (0,0) -- (2,0);
\end{tikzpicture}
~~~~ $\mbox{\Huge $ \subseteq$ }$~~~~
\begin{tikzpicture}
\draw[fill=black] (0/2,0/2) circle (3pt);
\draw[fill=black] (4/2,0/2) circle (3pt);
\draw[fill=black] (2/2,1/2) circle (3pt);
\draw[fill=black] (2/2,3/2) circle (3pt);
\draw (7/2,0/2) circle (3pt);
\node at (0,-0.3) {1};
\node at (2,-0.3) {3};
\node at (1.25,0.6) {4};
\node at (1,1.8) {2};
\node at (3.5,-0.3) {5};
\draw[thick] (1,0.5) -- (0,0) -- (1,1.5) -- (2,0) -- (1,0.5);
\draw[thick] (1,1.5) -- (1,0.5);
\draw[thick] (0,0) -- (2,0);
\end{tikzpicture}
~~~~ $\mbox{\Huge $ \subseteq$ }$ ~~~~
\begin{tikzpicture}
\draw[fill=black] (0/2,0/2) circle (3pt);
\draw[fill=black] (4/2,0/2) circle (3pt);
\draw[fill=black] (2/2,1/2) circle (3pt);
\draw[fill=black] (2/2,3/2) circle (3pt);
\draw[fill=black] (7/2,0/2) circle (3pt);
\node at (0,-0.3) {1};
\node at (2,-0.3) {3};
\node at (1.25,0.6) {4};
\node at (1,1.8) {2};
\node at (3.5,-0.3) {5};
\draw[thick] (1,0.5) -- (0,0) -- (1,1.5) -- (2,0) -- (1,0.5);
\draw[thick] (1,1.5) -- (1,0.5);
\draw[thick] (0,0) -- (2,0);
\draw[thick] (2,0) -- (3.5,0);
\end{tikzpicture}
\caption{$N_G[1] \subseteq N_G[2] \subseteq N_G[3]$ for neighbors of vertex 4 of Figure 2}
\end{figure}
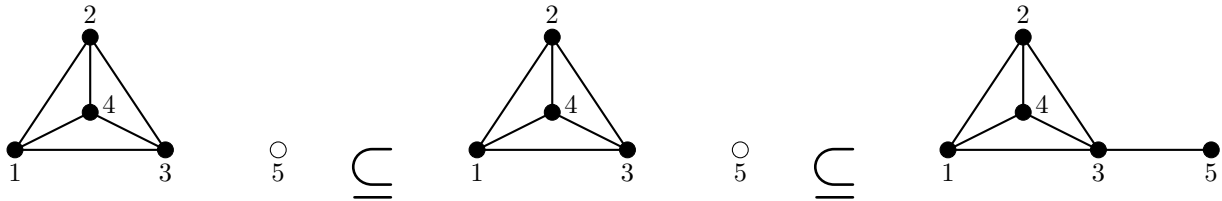

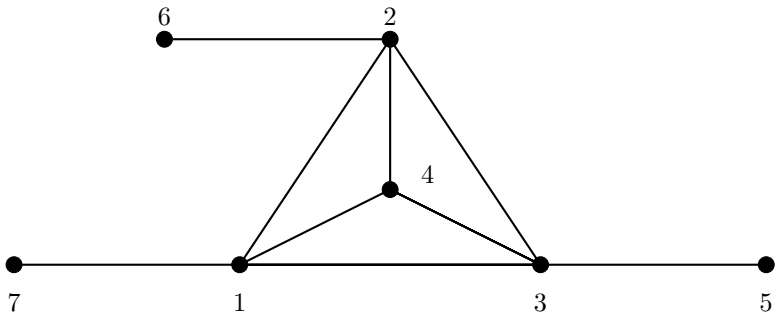
\begin{figure}[h]
\centering
\begin{tikzpicture}
\draw[fill=black] (0,0) circle (3pt);
\draw[fill=black] (4,0) circle (3pt);
\draw[fill=black] (2,1) circle (3pt);
\draw[fill=black] (2,3) circle (3pt);
\draw[fill=black] (7,0) circle (3pt);
\draw[fill=black] (-1,3) circle (3pt);
\draw[fill=black] (-3,0) circle (3pt);
\node at (0,-0.5) {1};
\node at (4,-0.5) {3};
\node at (2.5,1.2) {4};
\node at (2,3.3) {2};
\node at (7,-0.5) {5};
\node at (-1,3.3) {6};
\node at (-3,-0.5) {7};
\draw[thick] (0,0) -- (4,0) -- (2,1) -- (0,0)  -- (4,0) -- (2,1) -- (2,3);
\draw[thick] (0,0) -- (2,3) -- (4,0);
\draw[thick] (4,0) -- (7,0);
\draw[thick] (0,0) -- (-3,0);
\draw[thick] (2,3) -- (-1,3);
\end{tikzpicture}
\caption{Graph whose vertex 4 is simplicial but not simple}
\end{figure}

These distinguished vertex classes play an important role in the theory of chordal graphs. A graph is chordal if and only if every induced subgraph contains a simplicial vertex and a graph is strongly chordal if and only if every induced subgraph has a simple vertex \cite{Farber} \cite{Dirac}. These properties lend themselves to finding chordal and strongly chordal graphs by means of algorithms which perform a simplicial elimination ordering or a simple elimination ordering respectively; that is, an algorithm to determine whether a graph is chordal (strongly chordal) is to search the graph for a simplicial (simple) vertex, form the subgraph induced by removing it, and repeat. Such an algorithm effectively checks either that there are sufficiently many simplicial vertices- i.e. every induced subgraph contains at least one of them- or it identifies a particular induced subgraph which does not contain a simplicial vertex. Note that the algorithm does not run over all induced subgraphs nor is it a requirement to do so.

These algorithms provide polynomial-time methods of determining characteristics of graphs. For strong chordal graphs, the simple elimination ordering is equivalent to the \textit{strong elimination ordering} which is an ordering $v_1,...,v_n$ of the vertices such that if (i) $i<j$ and $k<l$, (ii) $v_k,v_l \in N[v_i]$, (iii) $v_j \in N[v_k]$, then $v_l \in N[v_j]$ \cite{Pelsmajer}. There are numerous other characterizations of strongly chordal graphs which include sun-characterizations, hereditary dually chordal characterizations, hypertree characterizations, and $(n+4)$-vertex cycle subgraph characterizations \cite{isgci}. One characterization that we will briefly touch on is the hypergraph characterization. A graph is chordal if and only if the hypergraph of its maximal cliques is the dual of a hypertree. This leads us to the definition of a dually chordal graph, which is a graph whose hypergraph of maximal cliques is itself a hyperrtree. A graph is called \textit{doubly chordal} if it is both chordal and dually chordal. This naming convention will influence our choice of nomenclature when it comes to bipartite graphs.

We now turn our attention to bipartite graphs proper and their subclasses. A bipartite graph is \textit{chordal bipartite} if every cycle of length greater than or equal to 6 has a chord. A bipartite graph is \textit{2-chordal bipartite} if every cycle of length greater than or equal to 6 has at least two chords. This property has also been called ``doubly chordal bipartite'' in the literature, but due to the classical definition of doubly chordal mentioned above we have adopted ``2-chordal bipartite'' in its place \cite{Coons}. If $G = (X,Y,E)$ is a bipartite graph and $G'$ is the graph obtained from $G$ by adding an edge in-between every pair of vertices in $X$, then $G$ is chordal bipartite if $G'$ is strongly chordal \cite{isgci}. This provides a link between our concepts. An important observation is that a chordal bipartite graph need not be chordal. For example, consider the graph $C_4$ with $V(C_4) := \{a,b,c,d\}$ and $ E(C_4):= \{ \{a,b\},\{c,d\},\{ac,ad,bc,bd \} \}$. Clearly this graph is vacuously chordal bipartite as it contains no cycles of length 6. However, it contains the cycle $acbd= ac+cb+bd+ac$, of length 4, and this cycle contains no chords (in fact, it uses every edge of $C_4$ as highlighted by the addition).

\section{Algorithm for determining whether a graph is 2-chordal bipartite}

We begin with our main algorithm which takes a bipartite graph as input and answers the yes/no question of whether or not the graph is 2-chordal bipartite. We define two classes beforehand. The first is the class \textit{graph} which is initialized with a triple containing its name, vertex set, and edge set given as pairs of vertices. For its methods it has the usual graph operations as well as a method called \textit{simple} which takes as input a vertex and outputs a boolean value representing whether or not the vertex is simple. The second class is \textit{biGraph} which is initialized with a 4-tuple containing its name, two disjoint vertex sets, and its edge set given as pairs selected from both sets. Its methods are the usual operations on bipartite graphs as well as a forgetful operation which constructs its underlying graph. With this in mind we are ready to present the algorithm.

\begin{algorithm}
\caption{2-chordal Bipartite Checker}\label{euclid}
\begin{algorithmic}[1]
\State \textbf{Input:} $G := (X,Y,E)$ a bipartite graph with vertex sets $X,Y$ and edge set $E$
\State \textbf{Output:} \textbf{True} if $G$ is 2-chordal bipartite, \textbf{false} otherwise
\State \qquad $H := \mathrm{graph}(X \cup Y,E)$
\State \qquad yetToGlue := $X$
\State \qquad \textbf{for} $x$ in $X$:
\State \qquad \qquad \qquad yetToGlue :=  yetToGlue$-\{x\}$
\State \qquad \qquad \textbf{for} $v$ in yetToGlue:
\State \qquad \qquad \qquad \textbf{add} $\{x,v\}$ to $H$.edges
\State \qquad flag := 0
\State \qquad \textbf{while} $H$.vertices $\neq \emptyset$ \textbf{and} flag $== 0$:
\State \qquad \qquad \textbf{for} vertex in $H$.vertices:
\State \qquad \qquad \qquad \textbf{if} $H$.simple(vertex) == True:
\State \qquad \qquad \qquad \qquad $H$ := $H$.delete(vertex)
\State \qquad \qquad \qquad \qquad \textbf{if} $H$.vertex == $\emptyset$:
\State \qquad \qquad \qquad \qquad \qquad flag :=1
\State \qquad \qquad \qquad \qquad \textbf{restart} while loop
\State \qquad \qquad flag := 2
\State \qquad \textbf{if} flag == 2:
\State \qquad \qquad \textbf{return} false
\State \qquad \textbf{for} edge in $E$:
\State \qquad \qquad $H_e$ := graph($X \cup Y$,$E-\{e\}$)
\State \qquad \qquad yetToGlue := $X$
\State \qquad \qquad \textbf{for} $x$ in $X$:
\State \qquad \qquad \qquad yetToGlue := yetToGlue - $\{x\}$
\State \qquad \qquad \qquad \textbf{for} $v$ in yetToGlue:
\State \qquad \qquad \qquad \qquad \textbf{add} $\{x,v\}$ to $H_e$.edges
\State \qquad \qquad flag := 0
\State \qquad \qquad \textbf{while} $H_e$.vertices $\neq \emptyset$ \textbf{and} flag == 0:
\State \qquad \qquad \qquad \textbf{for} vertex in $H_e$.vertices:
\State \qquad \qquad \qquad \qquad \textbf{if} $H_e$.simple(vertex) == True:
\State \qquad \qquad \qquad \qquad \qquad $H_e := H_e$.delete(vertex)
\State \qquad \qquad \qquad \qquad \qquad if $H_e$.vertex == $\emptyset$:
\State \qquad \qquad \qquad \qquad \qquad \qquad flag :=1
\State \qquad \qquad \qquad \qquad \qquad \textbf{restart} while loop
\State \qquad \qquad \qquad flag :=2
\State \qquad \textbf{if} flag == 2:
\State \qquad \qquad \textbf{return} false
\State \qquad \textbf{return} True

\end{algorithmic}
\end{algorithm}

A fast implementation of the simple method for vertices can be done in any software which can check whether a collection of sets is nested. Having introduced the algorithm we will now show that it successfully detects whether or not the graph is 2-chordal bipartite. We claim that the graph successfully checks (i) and (ii) of Theorem \ref{thm:mainThm}.

\begin{thm}[2-Chordal Bipartite Identification Theorem]
\label{thm:mainThm}
Given a bipartite graph $G = (X,Y,E)$, it is 2-chordal bipartite if and only if both of the following are true:
\begin{itemize}
\item[(i)] $G$ is chordal bipartite.
\item[(ii)] for each $e \in E$, the graph $G_e = (X,Y,E-\{e\})$ is chordal bipartite.
\end{itemize}
\end{thm}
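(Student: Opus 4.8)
The plan is to prove the two implications separately, working with cycles of length $\geq 6$ directly, since those are the only cycles that affect the chordal-bipartite/2-chordal-bipartite conditions.

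First I would prove the easy direction, that 2-chordal bipartite implies (i) and (ii). Condition (i) is immediate from the definitions: every cycle of length $\geq 6$ having at least two chords certainly has at least one chord. For (ii), fix $e \in E$ and a cycle $C$ of length $\geq 6$ in $G_e$. Since $G_e$ is a subgraph of $G$, $C$ is also a cycle in $G$, so by hypothesis $C$ has at least two chords \emph{in $G$}. At most one of these chords can be $e$ itself, so at least one chord of $C$ survives in $G_e$; hence $C$ has a chord in $G_e$, and $G_e$ is chordal bipartite.

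The substantive direction is the converse: assuming (i) and (ii), show $G$ is 2-chordal bipartite. Suppose not; then there is a cycle $C$ of length $\geq 6$ in $G$ with \emph{at most one} chord. By (i), $G$ is chordal bipartite, so $C$ has at least one chord; therefore $C$ has \emph{exactly one} chord, call it $e = \{u,v\}$ with $u,v$ on $C$. Now consider $G_e$. The cycle $C$ still exists in $G_e$ (we only removed a chord, not a cycle edge), but its unique chord $e$ has been deleted — note that no \emph{new} chords of $C$ can appear, since $G_e$ has strictly fewer edges than $G$. Hence $C$ is a chordless cycle of length $\geq 6$ in $G_e$, contradicting (ii). The one point requiring a sentence of care is verifying that $e$, being a chord of $C$, is not itself an edge of $C$, so that $C$ genuinely persists as a cycle in $G_e$; this is just the definition of a chord. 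I would also remark that a cycle cannot have $e$ simultaneously as an edge and as a chord, so the bookkeeping is clean.

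I do not expect a serious obstacle here: the result is essentially a counting observation (``at least two'' $=$ ``at least one, robustly under deleting one edge''). The only thing to be careful about is the quantifier structure in (ii) — it must range over \emph{all} $e \in E$, not just edges of a particular cycle — and the fact used above that deleting an edge can only destroy chords, never create them, which is obvious but worth stating explicitly so the contradiction in the converse is airtight.
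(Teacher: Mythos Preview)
Your proof is correct and follows essentially the same approach as the paper's: both arguments hinge on deleting the unique chord of a putative bad cycle and invoking condition (ii) to force a second chord (or, contrapositively, to produce a chordless cycle). The only cosmetic difference is that you phrase the ``(i)+(ii) $\Rightarrow$ 2-chordal'' direction by contradiction and the converse directly, whereas the paper does the opposite; the underlying idea is identical.
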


We first look at the intuition behind this theorem. Property (ii) is not sufficient alone as we could have a graph which was entirely a cycle of size 6. In this hypothetical graph $G$, the modified graph $G_e$ would satisfy (ii) for any $e \in E(G)$ as it contains no cycles. However, the original graph $G$ clearly fails to be chordal bipartite let alone 2-chordal bipartite. With the addition of (i), we guarantee that any cycle of size 6 or more contains at least one chord. As we iterate over the edges of the graph in (ii), we encounter this chord and our modified graph contains the same cycle, but with one fewer chords. Therefore if the modified graph is chordal bipartite, it must contain a second chord.

\begin{proof}
Let $G = (X,Y,E)$ be a graph. Suppose that (i) and (ii) hold for $G$. We now wish to show that $G$ is 2-chordal bipartite. Suppose that $G$ contains a cycle $C$ of length greater than or equal to $6$. As (i) is true, it must have at least one chord, which we will denote $c$. Now consider $G_e$. Clearly $C \subseteq E(G_e)$ and so $G_e$ also contains a cycle of size 6 or greater. Moreover, by (ii) we have that this cycle contains a chord in $G_e$. As $e$ is not an edge in $G_e$, this chord must be a second chord of the cycle $C$ when viewed in $G$ itself and so we are done.

Conversely, if a graph $G$ is 2-chordal bipartite, then it automatically must satisfy (i). Supposing that (ii) is false for $G$ implies that there exists some edge $e$ for which $G_e$ fails to be chordal bipartite. In which case $G_e$ contains a cycle of length greater than or equal to 6, call it $C \subseteq E(G_e)$, which contains no chords. Consider the embedded image of $C$ in $G$. As a cycle in $G$ it contains at most one more chord than it did in $G_e$ (namely $e$), hence it contains at most a single chord, contradicting the assumption that $G$ is 2-chordal bipartite.
\end{proof}

\begin{thm}
The 2-chordal Bipartite Checker Algorithm (Algorithm 1) returns True if and only if its input is a 2-chordal bipartite graph.
\end{thm}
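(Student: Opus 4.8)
The plan is to reduce the statement to Theorem~\ref{thm:mainThm}: I would show that Algorithm~1 returns \textbf{True} exactly when conditions (i) and (ii) of that theorem both hold, and then apply the theorem directly. The link between the algorithm's combinatorics and those conditions is supplied by the two facts recalled in the Background section: a bipartite graph $B = (X,Y,F)$ is chordal bipartite if and only if the graph $B'$ obtained from $B$ by adding every edge within $X$ is strongly chordal; and a graph is strongly chordal if and only if it has a simple elimination ordering, i.e. one can repeatedly delete a simple vertex from the current induced subgraph until the vertex set is exhausted.

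The one step that needs real argument is the correctness of the inner block (the \textbf{while} loop together with its nested \textbf{for} loop), which, given a graph $K$ as its working object, greedily deletes simple vertices and reports ``success'' (the flag reaches $1$ and the vertex set is emptied) or ``failure'' (a complete pass over the current vertices finds no simple vertex, so the flag is set to $2$). I would prove: \emph{this block succeeds on $K$ if and only if $K$ is strongly chordal.} The forward direction is immediate, since a successful run is literally a simple elimination ordering of $K$. For the converse, observe that at every moment the working object is an \emph{induced} subgraph of $K$; since the property ``every induced subgraph contains a simple vertex'' is hereditary, every induced subgraph of a strongly chordal graph is again strongly chordal, and a nonempty such graph therefore has a simple vertex. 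Hence the greedy deletion can never stall, and it exhausts the vertex set. The key point — and, I expect, the main (if modest) obstacle — is exactly this ``greedy is safe'' observation: one must be sure that an \emph{arbitrary} choice of simple vertex at each stage cannot lead the procedure into a dead end, and heredity of strong chordality is what rules that out. Alongside it I would carefully check the control flow so that the two terminal flag values genuinely correspond to success versus stalling (in particular that the loop always terminates with the flag equal to $1$ or $2$ when the input is nonempty).

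With the inner block characterized, the remainder is bookkeeping. The first phase of the algorithm constructs $H$, which is precisely $G' = (X,Y,E)'$, and runs the inner block on it; by the two equivalences above, this phase does not \textbf{return false} if and only if $G'$ is strongly chordal, i.e. if and only if $G$ is chordal bipartite — condition (i). The second phase iterates over the edges $e \in E$, constructs $H_e$, which (since the glued clique is always on $X$, and $e$ joins $X$ to $Y$) is exactly $(G_e)'$ for $G_e = (X,Y,E-\{e\})$, and runs the inner block on each; again this phase survives without returning \textbf{false} if and only if $G_e$ is chordal bipartite for every $e \in E$ — condition (ii). Since the algorithm returns \textbf{True} precisely when both phases are passed, it returns \textbf{True} precisely when (i) and (ii) both hold, which by Theorem~\ref{thm:mainThm} is precisely when $G$ is 2-chordal bipartite. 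I would finish by noting, as a sanity check, that this analysis also explains why the outer loop over all single-edge deletions cannot be dispensed with in the algorithm as stated.
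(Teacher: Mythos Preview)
Your proposal is correct and follows essentially the same route as the paper: reduce to Theorem~\ref{thm:mainThm}, interpret the inner block as a simple-elimination test for strong chordality of the $X$-completed graph, and conclude via the chordal-bipartite $\Leftrightarrow$ strongly-chordal correspondence. Your organization (isolate and prove correctness of the inner block, then do the bookkeeping for the two phases) is slightly cleaner than the paper's, and you are more explicit about the ``greedy is safe'' point via heredity of strong chordality, but the substance is the same.
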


\begin{proof}
We first show that that Algorithm 1 halts on any bipartite graph $G$ input. This amounts to showing that each while loop ends. We look first at the while loop on line 10. If the for loop on line 11 iterates through every vertex of $H$ (which is a finite set) then the variable flag is set to 2 and the while loop terminates followed by the algorithm returning false. If the for loop on line 11 fails to iterate through every vertex of $H$, then line 16 must have been carried out. This requires that there is a vertex which satisfies the condition on line 12 (i.e. it's simple) and so line 13 runs. This reduces the cardinality of the vertex set of $H$ and then resets the while loop. Therefore the while loop on line 10 must eventually break as there are only finitely many vertices to remove. There is only one other loop which could potentially prevent the algorithm from halting and it occur on line 23. However, this loop is a copy of the loop on line 10 and so it also ends.

Suppose that Algorithm 1 is run with input a 2-chordal bipartite graph $G = (X,Y,E)$. Note that if the algorithm terminates and the variable ``flag'' is never set equal to 2, then it returns True. Thus it is only necessary to show that flag is never set equal to 2. On line 17 we have that flag is set equal to 2. For this line of the algorithm to be run requires that the for loop on line 11 complete without resetting the while loop on line 10, i.e. the conditions of line 12 must never be satisfied for any vertex in $H$. That is, $H$ contains no simple vertices. However, if $H$ contains no simple vertices then $H$ is not strongly chordal. In the algorithm, $H$ is the graph obtained from $G$ by adding edges between every pair of vertices in $X$. Therefore, if $H$ fails to be strongly chordal, $G$ fails to be chordal bipartite. By Theorem \ref{thm:mainThm}, this implies that $G$ is not 2-chordal bipartite. This contradicts our assumption and so flag must not be set equal to 2 on line 17. The only other place flag can be set equal to 2 is on line 30. This requires that there exist an edge $e$ of our graph $G$ for which the graph obtained from $G$ by deleting $G$ fails to be chordal bipartite (the algorithm performs the same strongly chordal check). But this would imply that $G$ fails to satisfy condition (ii) of Theorem \ref{thm:mainThm} and so cannot happen. Therefore, if Algorithm 1 is run with input a 2-chordal bipartite graph, then it outputs True.

Conversely if the Algorithm outputs True, then by running the above argument in reverse we find that conditions (i) and (ii) of the theorem are satisfied and so its input must have been a 2-chordal bipartite graph.
\end{proof}

\section{k-chordal bipartite graphs}

Theorem \ref{thm:mainThm} has a natural generalization to $k$-chordal bipartite graphs.

\begin{thm}
\label{kchordThm}
Given a bipartite graph $G = (X,Y,E)$, it is $k$-chordal bipartite if and only if both of the following are true:
\begin{itemize}
\item[(i)] $G$ is chordal bipartite
\item[(ii)] for each $e \in E$, the graph $G_e = (X,Y,E-\{e\})$ is $(n-1)$-chordal bipartite.
\end{itemize}
\end{thm}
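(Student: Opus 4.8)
The plan is to prove Theorem~\ref{kchordThm} by mimicking, almost verbatim, the argument used for Theorem~\ref{thm:mainThm}: this generalization is a single-step reduction (from $k$ down to $k-1$), not a genuine induction, and the induction alluded to in the introduction enters only afterwards, when one iterates the reduction down to the base case $k=1$ of ordinary chordal bipartiteness. Throughout I use the natural definition that $G$ is $k$-chordal bipartite exactly when every cycle of length at least $6$ has at least $k$ chords, together with the convention that every bipartite graph is (vacuously) $0$-chordal bipartite; here the ``$(n-1)$'' appearing in condition~(ii) is read as $k-1$. Two elementary observations get used repeatedly: if $e$ is an edge of a cycle $C$ then $e$ is not a chord of $C$; and deleting a single edge from a graph decreases the number of chords of any fixed cycle by at most one.

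For the forward implication, assume $G$ is $k$-chordal bipartite. Since $k\ge 1$, every cycle of length $\ge 6$ has a chord, so $G$ is chordal bipartite, giving~(i). For~(ii), fix $e\in E$ and let $C$ be an arbitrary cycle of $G_e$ of length $\ge 6$. Because $E(G_e)=E-\{e\}\subseteq E$, the cycle $C$ is also a cycle of $G$ and hence has at least $k$ chords in $G$; passing to $G_e$ deletes only $e$, so $C$ still has at least $k-1$ chords in $G_e$. As $C$ was arbitrary, $G_e$ is $(k-1)$-chordal bipartite.

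For the converse, assume~(i) and~(ii), and let $C$ be a cycle of $G$ of length $\ge 6$. By~(i), $C$ has a chord $c$ in $G$; since $c$ is a chord it is not an edge of $C$, so $C$ remains a cycle of length $\ge 6$ in $G_c=(X,Y,E-\{c\})$. By~(ii), $G_c$ is $(k-1)$-chordal bipartite, so $C$ has at least $k-1$ chords in $G_c$; each such chord is an edge of $G_c\subseteq G$ joining non-consecutive vertices of $C$, hence a chord of $C$ in $G$, and none of them equals $c$. Adding $c$ back shows $C$ has at least $k$ chords in $G$, so $G$ is $k$-chordal bipartite.

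I do not expect a real obstacle. The only subtleties are bookkeeping ones: in the forward direction, confirming that a cycle witnessing failure of the $(k-1)$-property in $G_e$ lifts to a cycle of $G$ whose chord count can have dropped by at most one; and in the converse, checking that the chord $c$ removed from $G$ is genuinely distinct from the $k-1$ chords furnished by~(ii), so that the counts add rather than overlap. It is also worth recording explicitly the $k=1$ (and $k=0$) conventions, so that the downstream recursion used by the algorithm bottoms out correctly.
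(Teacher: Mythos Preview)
Your proof is correct and follows essentially the same argument as the paper: in both directions one finds a chord $c$ via~(i), removes it, and invokes~(ii) on $G_c$ to count the remaining $k-1$ chords (or, contrapositively, lifts a bad cycle from $G_e$ back to $G$). Your observation that this is a direct one-step reduction rather than a genuine induction is apt---the paper frames the proof as induction on $k$ but never actually invokes the inductive hypothesis, so your presentation is arguably cleaner.
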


We allow 0-chordal bipartite to be a vacuous truth of all graphs so that Theorem \ref{kchordThm}. is valid for all $k \in \N$. We call an $k$-chordal bipartite graph \textit{trivial} if it contains no cycles of length 6 or greater. The proof of this theorem mirrors the 2-chordal bipartite case.

\begin{proof}
We prove this by induction. We've already shown the base case, so let $k>2$ and suppose that the theorem holds for all natural numbers less than $k$. Let $G$ be a bipartite graph and suppose that (i) and (ii) hold for the $k$-chordal bipartite version of the theorem, i.e. $G$ is chordal bipartite and for each $e \in E$ the graph $G_e$ is $(k-1)$-chordal bipartite.

If $G$ has a cycle of size $6$, then by (i) it contains a chord, call it $e$. By (ii), $G_e$ contains $(k-1)$ chords and so the cycle in $G$ must contain all of those chords plus $e$, i.e. at least $k$ chords. Therefore $G$ is $k$-chordal bipartite.

Conversely, if $G$ is $k$-chordal bipartite then it is chordal bipartite. Supposing that (ii) is false for $G$ implies that there exists an edge $e$ for which $G_e$ fails to be $(k-1)$-chordal bipartite. That is, there is a cycle $C \subseteq E(G_e)$ which contains at most $k-2$ chords. Considering the image of $C$ in $G$ we find that it contains at most $k-1$ chords, a contradiction.
\end{proof}

There is an important bound on the minimum cycle size for nontrivial $k$-chordal bipartite graphs for large $n$.

\begin{lem}
There does not exist a nontrivial $n$-chordal bipartite graph containing a cycle of size exactly $6$ for $n \geq 4$.
\end{lem}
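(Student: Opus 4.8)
The plan is to reduce the statement to a purely local counting fact: a single $6$-cycle sitting inside a bipartite graph can support at most three chords, so it can never meet the defining requirement of an $n$-chordal bipartite graph once $n\ge 4$. Accordingly, I would argue by contradiction: suppose $G=(X,Y,E)$ is an $n$-chordal bipartite graph with $n\ge 4$ that contains a cycle $C$ of length exactly $6$, and derive a contradiction from the structure of $C$.

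First I would set up notation for $C$. Write the vertices of $C$ cyclically as $v_1,v_2,\dots,v_6$ with cycle edges $v_iv_{i+1}$ (indices taken mod $6$). Since $C$ is an even cycle and $G$ is bipartite, the two color classes intersect $C$ in exactly $\{v_1,v_3,v_5\}$ and $\{v_2,v_4,v_6\}$. A chord of $C$ is, by definition, an edge of $G$ joining two vertices of $C$ that are non-consecutive along $C$; because $G$ is bipartite, such an edge must also join a vertex of odd index to a vertex of even index. Enumerating all index pairs $\{i,j\}$ with $i$ odd, $j$ even, and $i\not\equiv j\pm 1 \pmod 6$, the only surviving pairs are $\{v_1,v_4\}$, $\{v_2,v_5\}$, and $\{v_3,v_6\}$ — the three long diagonals of the hexagon. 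Hence $C$ has at most three chords in $G$.

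Next I would invoke the definition of $n$-chordal bipartite directly: every cycle of length at least $6$ has at least $n$ chords. Applying this to $C$, whose length is $6$, forces $C$ to have at least $n\ge 4$ chords, contradicting the bound of at most $3$ established above. Therefore no such $G$ can exist, which is exactly the claim. (The hypothesis that $G$ be nontrivial is automatic here, since containing a $6$-cycle already makes $G$ nontrivial; I would simply note this in passing.)

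I do not expect a genuine obstacle in this argument; it is essentially a finite case check. The two points that require a little care are: (a) the chord enumeration — making sure that the joint constraints "non-consecutive on $C$" and "endpoints in different color classes" leave precisely the three diagonals, with no pair over- or under-counted; and (b) confirming that the operative definition of $k$-chordal bipartite quantifies over \emph{all} cycles of length $\ge 6$, not merely induced ones, so that the cycle $C$ itself is subject to the defining inequality irrespective of whatever additional edges $G$ may carry.
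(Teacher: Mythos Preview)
Your argument is correct and is essentially the same as the paper's: both observe that a $6$-cycle in a bipartite graph splits into two color classes of three vertices each and therefore admits at most three chords (the three long diagonals), which immediately contradicts the requirement of $n\ge 4$ chords. The paper states this in a single sentence with an accompanying figure, while you spell out the enumeration explicitly, but the content is identical.
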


This can easily be seen by noting that a cycle of size 6 is constructed from two sets of three vertices and can at most admit three chords, as shown in Figure 5.

\begin{figure}[h]
\centering
\begin{tikzpicture}

\draw[fill=black] (-3,2) circle (3pt);
\draw	(0,2) circle (3pt);
\draw[fill=black] (3,2) circle (3pt);
\draw (3,0) circle (3pt);
\draw (-3,0) circle (3pt);
\draw[fill=black] (0,0) circle (3pt);
\draw[thick] (-3,2) -- (-3,0) -- (0,0) -- (3,0) -- (3,2) -- (0,2) -- (-3,2);
\draw[dash dot] (-3,2) -- (3,0);
\draw[dash dot] (3,2) -- (-3,0);
\draw[dash dot] (0,0) -- (0,2);
\end{tikzpicture}
\caption{Cycle of size 6 in a bipartite graph admits at most 3 chords}
\end{figure}
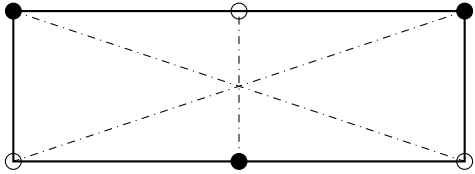

\begin{thm}
For $k \geq 4$, there are no nontrivial $k$-chordal bipartite graphs.
\end{thm}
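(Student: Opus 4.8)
The plan is to show that a \emph{nontrivial} $k$-chordal bipartite graph with $k \geq 4$ would be forced to contain a cycle of length exactly $6$, which is impossible by the preceding Lemma. The bridge between "contains a long cycle" and "contains a $6$-cycle" is the following reduction step, which I would isolate as the crux of the argument: if $G$ is chordal bipartite and contains a cycle of length at least $6$, then $G$ contains a cycle of length exactly $6$.

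To prove this reduction I would argue by minimality. Let $C = v_0 v_1 \cdots v_{2m-1} v_0$ be a cycle of $G$ of smallest length subject to $2m \geq 6$; since $G$ is bipartite, every cycle has even length, so $C$ has even length $2m$. If $2m = 6$ we are done, so suppose $2m \geq 8$. Because $G$ is chordal bipartite, $C$ has a chord $v_i v_j$. Since $C$ alternates between the parts $X$ and $Y$, the two endpoints of this chord lie in different parts, so $j - i$ is odd; and since $v_i v_j$ is not an edge of $C$ we have $3 \leq j - i \leq 2m - 3$. The chord together with each of the two $v_i$--$v_j$ arcs of $C$ forms a cycle of $G$; these two cycles have even lengths $(j-i)+1$ and $(2m-(j-i))+1$, and each is strictly shorter than $2m$. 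Their lengths sum to $2m+2 \geq 10$, so at least one of them has length at least $6$. This is a cycle of $G$ of length at least $6$ shorter than $C$, contradicting minimality; hence $2m = 6$.

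With the reduction established the theorem follows immediately. Suppose $G$ is a nontrivial $k$-chordal bipartite graph with $k \geq 4$. By Theorem~\ref{kchordThm}, clause (i) (equivalently, directly from the definition, since every cycle of length at least $6$ then has at least $k \geq 1$ chords), $G$ is chordal bipartite; and by nontriviality $G$ contains a cycle of length at least $6$. By the reduction, $G$ therefore contains a cycle of size exactly $6$, so $G$ is a nontrivial $n$-chordal bipartite graph with $n = k \geq 4$ containing a $6$-cycle, contradicting the Lemma. Hence no such $G$ exists: every $k$-chordal bipartite graph with $k \geq 4$ is trivial.

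The only delicate point, and the step I expect to demand the most care in the writeup, is the chord-splitting inside the reduction: one must check that a chord of an even cycle in a bipartite graph genuinely yields two strictly shorter \emph{even} subcycles, at least one of which still has length at least $6$. All of this rests on the parity bookkeeping ($j-i$ odd, $3 \leq j-i \leq 2m-3$, sublengths summing to $2m+2$), and it is precisely here that the hypothesis $2m \geq 8$ is used; everything else is a routine assembly of the Lemma and Theorem~\ref{kchordThm}.
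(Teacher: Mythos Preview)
Your proof is correct and follows essentially the same approach as the paper: a descent (phrased by you as minimality, by the paper as iteration) on the length of a cycle of size at least $6$, splitting along a chord and arguing that at least one of the two resulting subcycles still has length at least $6$. The only cosmetic difference is in the parity bookkeeping: you observe up front that a chord in a bipartite cycle joins vertices at odd distance, so both subcycles are automatically even and the pigeonhole on $2m+2\geq 10$ finishes it; the paper instead allows the bad case $|C_1|=|C_2|=5$ and then rules it out a posteriori by noting that an odd cycle cannot occur in a bipartite graph.
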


\begin{proof}
Suppose that $G$ is a nontrivial $n$-chordal bipartite graph for fixed $n \geq 4$. Then it contains some cycle $C \subseteq E(G)$ of length greater than or equal to $6$. If the length equals $6$ then we've arrived at a contradiction by Lemma 2.3. If the length is greater than $6$ then we can choose any one of the chords which subdivide the cycle and obtain two new cycles. If one of these cycles has a length greater than or equal to 6 (but is also necessarily of length less than the length of cycle $C$), then we restart the process replacing $C$ with this new smaller cycle. 

If neither of the two cycles (which we will denote $C_1,C_2$) produced by the chord bisecting $C$ have length greater than or equal to $6$, then we consider the following: (1) $C$ is a cycle of even length, (2) the length of $C$ is greater than $6$ by the above, and (3) $|C| = |C_1|+|C_2| -2$. Therefore,
\[
8 \leq |C| = |C_1| + |C_2| - 2
\]
which implies that $10 \leq |C_1|+|C_2|$. The only way to partition $10$ into two natural numbers $|C_1|,|C_2|$ such that neither number is greater than or equal to $6$ is for $|C_1| = |C_2| = 5$. Thus, $|C| = 8$. We can draw the corresponding cycle and chord as in Figure 6.

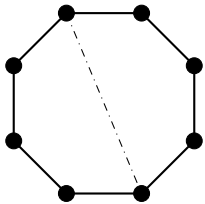
\begin{figure}[h]
\centering
\begin{tikzpicture}

\draw[fill=black] (1/2,2.4/2) circle (3pt);
\draw[fill=black] (-1/2,2.4/2) circle (3pt);
\draw[fill=black] (1/2,-2.4/2) circle (3pt);
\draw[fill=black] (-1/2,-2.4/2) circle (3pt);
\draw[fill=black] (2.4/2,1/2) circle (3pt);
\draw[fill=black] (-2.4/2,1/2) circle (3pt);
\draw[fill=black] (2.4/2,-1/2) circle (3pt);
\draw[fill=black] (-2.4/2,-1/2) circle (3pt);
\draw[thick] (-1/2,2.4/2) -- (1/2,2.4/2) -- (2.4/2,1/2) -- (2.4/2,-1/2) -- (1/2,-2.4/2) -- (-1/2,-2.4/2) -- (-2.4/2,-1/2) -- (-2.4/2,1/2) -- (-1/2,2.4/2);
\draw[dash dot] (-1/2,2.4/2) -- (1/2,-2.4/2);
\end{tikzpicture}
\caption{Cycle $C$ of size 8 with chord inducing two cycles of size 5}
\end{figure}

However, Figure 6 cannot arise in a bipartite graph as there is no two-coloring  of the vertices in the figure when the chord is present. Therefore, one of the two cycles formed by the chord dividing $C$ must have length greater than or equal to 6 yet less than the length of $C$ itself and so we're returned to the case above.

As this process always decreases the length of the cycle $C$ and has lower bound equal to $6$, then a finite iteration of this process must achieve said bound, leading to the contradiction by Lemma 2.3.
\end{proof}

\section{Efficiency of the algorithm}

Our attention now turns to the efficiency of the problem. Algorithm 1 is a naive implementation in that faster means of checking whether a graph is strongly chordal are known, in fact it can checked via the construction of a strong perfect elimination ordering. This construction is related to the problem of doubly lexical orderings of matrices and can be performed in time $O(\min(s^2,(s+t) \log s))$ where $s$ is the cardinality of the vertex set and $t$ is the cardinality of the edge set \cite{Lubiw} \cite{Tarjan}. This allows us to check that a bipartite graph is chordal bipartite in $O(\min(s^2,(2s+t) \log(s)) = O(\min(s^2,(s+t) \log s)$ time (as a graph is chordal bipartite if and only if the graph obtained by adding an edge between every pair of vertices in one of its partitions is strongly chordal). Thus, the decision problem is in complexity class P. The algorithm for determining whether a bipartite graph is 2-chordal bipartite amounts to iterating the chordal bipartite algorithm over the edge set of the bipartite graph. Within the loop the subgraph has one fewer edge but the same number of vertices.

\begin{thm}[2-chordal Bipartite Efifciency]
Given a bipartite graph $G$, it can be checked whether or not it is 2-chordal bipartite in polynomial time on the vertex set alone.
\end{thm}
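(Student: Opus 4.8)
The plan is to reduce the decision problem to $1+|E|$ invocations of the known chordal-bipartite recognition routine and to observe that, because $G$ is bipartite, this count — and hence the total cost — is bounded by a polynomial in $s := |X|+|Y|$ alone.

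First I would invoke Theorem \ref{thm:mainThm}: $G$ is 2-chordal bipartite if and only if (i) $G$ is chordal bipartite and (ii) for every $e \in E$ the graph $G_e = (X,Y,E \setminus \{e\})$ is chordal bipartite. Thus it suffices to bound the cost of a single chordal-bipartite test and multiply by $1+|E|$. Second, I would recall (as already noted in the preceding paragraph, following Lubiw and Paige–Tarjan) that a chordal-bipartite test on a bipartite graph with $s$ vertices and $t$ edges can be carried out in $O(\min(s^2,(s+t)\log s))$ time by forming the graph that adds all edges inside one part and checking it for a strong perfect elimination ordering; in particular each such test costs $O(s^2)$.

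Third — and this is the only step where the phrase \emph{on the vertex set alone} needs attention — I would bound the number of edges using bipartiteness: $|E| \le |X|\,|Y| \le (|X|+|Y|)^2/4 = s^2/4$, so the loop in (ii) runs $O(s^2)$ times. Constructing each $G_e$, and inside each call adding the $\binom{|X|}{2}$ clique edges on $X$, costs $O(s^2)$ per iteration and therefore does not dominate. Combining, the whole procedure runs in $O(s^2)\cdot O(s^2) = O(s^4)$ time, which is polynomial in $s$; correctness is already supplied by Theorem \ref{thm:mainThm} together with the correctness of Algorithm 1 established above.

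The main obstacle is precisely this absorption of the edge parameter: the natural runtime of Algorithm 1 is expressed in terms of both $s$ and $t$, and one must use bipartiteness to replace $t$ by a polynomial in $s$. Once that observation is made the polynomial bound is immediate; I would also remark in passing that the exponent can be improved — e.g.\ by reusing the doubly lexical ordering data across successive edge deletions rather than recomputing it from scratch — but this is not needed for membership in $\mathrm{P}$.
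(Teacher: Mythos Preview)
Your proposal is correct and follows essentially the same approach as the paper: reduce via Theorem \ref{thm:mainThm} to $1+|E|$ chordal-bipartite tests, quote the $O(\min(s^2,(s+t)\log s))$ recognition bound, and then absorb $t$ into $s$ via the bipartite edge bound $t \le s^2/4$. The paper writes the intermediate cost as $T_{s,t} + t(D_{s,t}+T_{s,t-1})$ and leaves the final polynomial implicit, whereas you collapse directly to $O(s^4)$; this is purely cosmetic.
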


\begin{proof}
Let $G$ be our graph with $s$ vertices and $t$ edges. Let $D_{s,t}$ be the time it takes to produce an induced subgraph from $G$ (can be taken to be $s+t-1$ assuming operations are done on vertices and edges) and let $T_{s,t}$ be the time it takes to check whether $G$ is chordal bipartite. To check that $G$ is 2-chordal bipartite, we first check that $G$ is itself chordal bipartite and then check that all of its subgraphs created by deleting an edge are chordal bipartite. Then determining whether $G$ is 2-chordal bipartite can be done in
\[
T_{s,t} + t(D_{s,t}+T_{s,t-1})
=
O(
\max \left\lbrace
T_{s,t},
t D_{s,t},
t T_{s,t-1}
\right\rbrace )
\]
\[
=
O(
\max \left\lbrace
\min(s^2,(s+t) \log s)
,
t D_{s,t} ,
\min(ts^2,(t(s+t-1))\log s)
\right\rbrace
)
\]
Therefore the 2-chordal bipartite problem is polynomial on the vertex and edge set. To show that it is polynomial on the vertex set, we note that for bipartite graphs $t$ is bounded by $\frac{s^2}{4}$. This is because the complete bipartite graph $K_{s_1,s_2}$ contains $s_1 s_2$ edges and for a fixed number of total vertices, i.e. $s = s_1+s_2$, the maximum is obtained when $s_1=s_2$ (or $s_1 = s_2+1$ if $s$ is odd). These maxima are $\frac{s^2}{4}$ and $\frac{s^2-1}{4}$ respectively. So by replacing every $t$ above with $s^2/4$ we obtain a polynomial bound on the runtime in terms of the cardinality of the vertex set alone.
\end{proof}

Going a bit further, it's reasonable to assume that $D_{s,t}$ is negligible compared to the other operations and that $t>1$, in which case the above is $O(\min \lbrace ts^2,t(s+t-1) \log s)$. In other words, the process is as efficient as possible with the current characterization of 2-chordal bipartite graphs offered here. Like the $2$-chordal bipartite problem, the $3$-chordal bipartite problem, and the general $k$-chordal bipartite problem, is polynomial in complexity on the vertex set alone and its runtime can be found by iterating this process giving
\[
O \left(
T_{s,t-k+1} \prod_{i=0}^{k-2} (t-i)
\right)
\]
under the mild assumptions given above. A small corollary of Theorem 5.1 is that the induced subgraph problem for the double square graph is also in P.

\bibliography{mybibfile}{}
\bibliographystyle{plain}

\end{document}